\newtheorem{thm}{Theorem}
\newtheorem{lem}[thm]{Lemma}
\newtheorem{prop}[thm]{Proposition}
\theoremstyle{definition}
\theoremstyle{definition}
\theoremstyle{plain}
\begin{document}

\title[Symbolic Generic Initial System of Points on a Conic]{The Symbolic Generic Initial System of Points on an Irreducible Conic}
\author{Sarah Mayes}

\maketitle
\vspace*{-2em} 

\begin{abstract}
In this note we study the limiting behaviour of the symbolic generic initial system $\{ \text{gin}(I^{(m)}) \}$ of an ideal $I \subseteq K[x,y,z]$ corresponding to an arrangement of $r$ points of $\mathbb{P}^2$ lying on an irreducible conic.  In particular, we show that the \textit{limiting shape} of this system is the subset of $\mathbb{R}^2_{\geq 0}$ such consisting of all points above the line through $(\text{min} \{ \frac{r}{2}, 2 \},0)$ and $(0, \text{max} \{ \frac{r}{2}, 2 \})$.
\end{abstract}

The general research trend looking at the asymptotic behaviour of collections of algebraic objects is motivated by the idea that there is often a structure revealed in the limit that is difficult to see when studying individual objects (see, for example, \cite{ELS01}, \cite{Siu01}, \cite{Huneke92}, and \cite{ES09}).  The asymptotic behaviour of a collection of monomial ideals $\mathrm{a}_{\bullet}$ such that $\mathrm{a}_i \cdot \mathrm{a}_j \subseteq \mathrm{a}_{i+j}$ (a \textit{graded system of monomial ideals}) can be described by its \textit{limiting shape} $P$.  If $P_{\mathrm{a}_i}$ denotes the Newton polytope of $\mathrm{a}_i$, then the limiting shape $P$ is defined to be the limit $\lim_{m \rightarrow \infty} \frac{1}{m} P_{\mathrm{a}_m}$ (\cite{Mayes12a}).  In addition to giving a simple geometric interpretation of the limiting behaviour, $P$ completely determines the asymptotic multiplier ideals of $\mathrm{a}_{\bullet}$ (see \cite{Howald01}).

Generic initial ideals have a nice combinatorial structure, but are often difficult to compute and usually have complicated sets of generators (see \cite{Green98} for a survey or \cite{Cimpoeas06} and  \cite{Mayes12c} for examples).  This motivates a series of work describing the limiting shape of generic initial systems,  $\{ \text{gin}(I^m) \}_m$, and of symbolic generic initial systems, $\{ \text{gin}(I^{(m)}) \}_m$ (\cite{Mayes12a}, \cite{Mayes12b}, \cite{Mayes12c}, \cite{Mayes12e}).  The goal of this paper is to describe the limiting shape of the symbolic generic initial system of the ideal of $r$ points in $\mathbb{P}^2$ lying on an irreducible conic.

We will see that when $I$ is the ideal of points in $\mathbb{P}^2$, each of the polytopes $P_{\text{gin}(I^{(m)})}$, and thus $P$ itself, can be thought of as a subset of $\mathbb{R}^2$.  The following theorem describes $P$ in the case we are interested in. 


\begin{thm}
\label{thm:mainthm}
Let $I \subseteq R = K[x,y,z]$ be the ideal of $r >1$ distinct points $p_1, \dots, p_r$ of $\mathbb{P}^2$ lying on an irreducible conic and let $P \subseteq \mathbb{R}^2_{\geq 0}$ be the limiting shape of the reverse lexicographic symbolic generic initial system $\{ \textnormal{gin}(I^{(m)})\}_m$.  If $ r \geq 4$, then $P$ has a boundary defined by the line through the points $(2,0)$ and $(0, \frac{r}{2})$ (see Figure \ref{fig:limitingshape}).  If $r =2$ or $r=3$, then $P$ has a boundary defined by the line through the points $(\frac{r}{2}, 0)$ and $(0, 2)$.   
\end{thm}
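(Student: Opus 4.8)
The plan is to translate the problem into the asymptotic Hilbert function of $R/I^{(m)}$, compute that function using the conic, and reconstruct $P$ from it.

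\emph{Reduction to Hilbert functions.} Since $p_1,\dots,p_r$ are reduced points, $I^{(m)}=\bigcap_i\mathfrak m_{p_i}^m$ is saturated and $R/I^{(m)}$ is one--dimensional Cohen--Macaulay; hence (Bayer--Stillman) the reverse lexicographic $\mathrm{gin}(I^{(m)})$ is Borel--fixed with no minimal generator divisible by $z$, so $\mathrm{gin}(I^{(m)})=J_mR$ for a strongly stable Artinian ideal $J_m\subseteq K[x,y]$ whose Hilbert function is the $h$--vector of $R/I^{(m)}$, and $P_{\mathrm{gin}(I^{(m)})}$ is (the copy in $\mathbb R^2_{\ge0}$ of) the Newton polygon of $J_m$. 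Such a $J_m$ is determined by its Hilbert function: letting $\mu^{(m)}_j$ be the number of monomials of $y$--degree $j$ not in $J_m$, we get $\mu^{(m)}_0\ge\mu^{(m)}_1\ge\cdots$ with consecutive drops at most $1$, the Newton polygon is $\mathrm{conv}\{(\mu^{(m)}_j,j)\}_j+\mathbb R^2_{\ge0}$, and $H_{R/I^{(m)}}(d)=\sum_{j\ge0}\min\bigl(\mu^{(m)}_j,(d-j+1)_+\bigr)$. Passing to the limit as in \cite{Mayes12a}, the boundary of $P$ is the convex envelope of the profile $\bar\mu(\sigma):=\lim_m\tfrac1m\mu^{(m)}_{\lfloor\sigma m\rfloor}$, and $\bar\mu$ is the unique non--increasing function of slope $\ge-1$ satisfying $\int_0^\infty\min\bigl(\bar\mu(\sigma),(\delta-\sigma)_+\bigr)\,d\sigma=\bar H(\delta)$, where $\bar H(\delta):=\lim_m\tfrac1{m^2}H_{R/I^{(m)}}(\lfloor\delta m\rfloor)$. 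So it suffices to compute $\bar H$.

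\emph{The Hilbert function.} Let $q$ define the conic $Q$. Since $Q$ is smooth at each $p_i$, $q$ vanishes there to order exactly $1$, so $I^{(m)}:q=I^{(m-1)}$ and the residual of $mp_1+\cdots+mp_r$ along $Q$ is $(m-1)p_1+\cdots+(m-1)p_r$; the Castelnuovo sequence
\[
0\longrightarrow I^{(m-1)}(-2)\xrightarrow{\ \cdot q\ }I^{(m)}\longrightarrow\mathcal O_Q\bigl(-m(p_1+\cdots+p_r)\bigr)\longrightarrow 0,
\]
together with $Q\cong\mathbb P^1$ and $\mathcal O_Q(1)=\mathcal O_{\mathbb P^1}(2)$, gives $\dim(I^{(m)})_d=\dim(I^{(m-1)})_{d-2}+(2d-rm+1)_+$ for $d\gg0$. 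The only obstruction to right--exactness is a quotient of $H^1\bigl(I^{(m-1)}(d-2)\bigr)$, which vanishes for $d\ge2+\mathrm{reg}(I^{(m-1)})$ and is harmless in any case when $2d\le rm$ (the new term then being $0$). Iterating from $\dim R_e=\binom{e+2}{2}$ and letting $d=\lfloor\delta m\rfloor\to\infty$ yields
\[
\bar H(\delta)=\tfrac12\delta^2-\tfrac12(\delta-2)_+^2-\int_0^1\bigl((2\delta-r)+(r-4)\sigma\bigr)_+\,d\sigma .
\]

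\emph{Recovering $P$.} The sign of $r-4$ now splits the cases. For $r\ge4$ the integrand is non--decreasing in $\sigma$, and a direct evaluation gives $\bar H(\delta)=\tfrac12\delta^2$ for $\delta\le2$, $\;\bar H(\delta)=\tfrac12\delta^2-\tfrac{r}{2(r-4)}(\delta-2)^2$ for $2\le\delta\le r/2$ (vacuous when $r=4$), and $\bar H(\delta)=r/2$ for $\delta\ge r/2$; for $r\le3$ the integrand is non--increasing and symmetrically $\bar H(\delta)=\tfrac12\delta^2$ for $\delta\le r/2$, $\;\bar H(\delta)=\tfrac12\delta^2-\tfrac1{2(4-r)}(2\delta-r)^2$ for $r/2\le\delta\le2$, and $\bar H(\delta)=r/2$ for $\delta\ge2$. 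In either case I would then check, by a short piecewise--linear integration and a comparison of $\delta$--derivatives, that the \emph{linear} function $\bar\mu(\sigma)=\widehat\alpha\,(1-\sigma/\widehat\rho)$ on $[0,\widehat\rho]$ solves the integral identity, where $(\widehat\alpha,\widehat\rho)=(2,r/2)$ if $r\ge4$ and $(\widehat\alpha,\widehat\rho)=(r/2,2)$ if $r\le3$ (both choices have slope $\ge-1$, so are admissible). By uniqueness $\bar\mu$ is this linear function; being linear it is its own convex envelope, so $\partial P$ is the segment from $(\widehat\alpha,0)$ to $(0,\widehat\rho)$, which is exactly the stated conclusion. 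As sanity checks one has $\widehat\alpha=\lim_m\alpha(I^{(m)})/m$ --- with $\alpha(I^{(m)})=2m$ for $r\ge4$ by Bézout applied to the intersection with $Q$ after stripping off factors of $Q$, and $\alpha(I^{(m)})=\lceil rm/2\rceil$ for $r\le3$ --- and $\int_0^{\widehat\rho}\bar\mu=\widehat\alpha\widehat\rho/2=r/2=\lim_m\deg(mp_1+\cdots+mp_r)/m^2$.

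\emph{Main obstacle.} The real work is in the Hilbert--function step: one must control the right--exactness of the Castelnuovo sequence in low degrees, i.e.\ the vanishing of $H^1\bigl(I^{(m-1)}(d-2)\bigr)$ for $d$ up to roughly $\tfrac r2m$ --- precisely the range that determines $P$. What makes this tractable is that the potential error is invisible exactly where it could occur, so an induction on $m$ (assume a linear regularity bound for $I^{(m-1)}$, compute the Hilbert function of $I^{(m)}$, read off $\mathrm{reg}(I^{(m)})$) should close; making that induction airtight --- or else invoking a known free resolution for fat points supported on a conic --- is the technical heart. A secondary point needing care is the first paragraph's passage from asymptotic Hilbert functions to the limiting Newton polygon, for which I would lean on \cite{Mayes12a,Mayes12b}.
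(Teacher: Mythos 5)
Your outline is coherent and your formulas check out (the $\bar H$ you write down agrees with what Catalisano's resolutions give, and your intercepts match the theorem), but the step you yourself flag as the ``technical heart'' is a genuine gap, and the patch you sketch does not close it. The exactness of the recursion $\dim(I^{(m)})_d=\dim(I^{(m-1)})_{d-2}+(2d-rm+1)_+$ is only justified by your two observations when $2d-rm<0$ or when $d\ge 2+\mathrm{reg}(I^{(m-1)})$. With the true regularity $\mathrm{reg}(I^{(m-1)})\approx\max\{2(m-1),\tfrac r2(m-1)\}$ these two ranges do not meet: for $r=4,5$ they miss each other by a bounded number of degrees (harmless asymptotically, but it should be said), while for $r=3$ (and $r=2$) the uncovered window runs from about $\tfrac r2 m$ to about $2m$, has length of order $m$, and there $h^0(\mathcal O_{\mathbb P^1}(2d-rm))$ is of order $m$; a failure of the trace surjectivity in that window would perturb the Hilbert function by $\Theta(m)$ per step, hence $\Theta(m^2)$ after iteration, i.e.\ it would change $\bar H$ and therefore $P$. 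So the assertion ``the potential error is invisible exactly where it could occur'' is false precisely in the low-$r$ cases. Moreover the induction you propose needs \emph{exact} top-range values, since $h^1(\mathcal I_{Z_{m-1}}(e))=\deg Z_{m-1}-H_{R/I^{(m-1)}}(e)$: an $O(m)$ uncertainty cannot be fed back in to recover the regularity bound. The surjectivity of $H^0(\mathcal I_{Z_m}(d))\to H^0(\mathcal O_{\mathbb P^1}(2d-rm))$ in the intermediate range is a real theorem --- it is essentially the content of \cite{Catalisano91} --- and your fallback option (``invoking a known free resolution for fat points supported on a conic'') is exactly what the paper does: it quotes Catalisano's resolutions (its Propositions 5 and 6) rather than re-deriving the Hilbert data.

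A second, smaller gap is the limit machinery in your first paragraph: existence of the profile limit $\bar\mu$, the integral identity relating $\bar\mu$ to $\bar H$, its uniqueness among non-increasing profiles of slope $\ge-1$, and the identification of $\partial P$ with the convex envelope of $\bar\mu$. This is believable (slope $\ge-1$ makes $\sigma\mapsto\bar\mu(\sigma)+\sigma$ monotone, so $\bar H$ essentially records its inverse), but it is asserted, not proved, and you should check whether \cite{Mayes12a} actually contains it in this form. It is worth noting that the paper avoids both issues by a shorter route: from the structure of a revlex gin of a points ideal it reads off only the two intercepts of $P$, namely $\lim D(m)/m$ and $\lim (U(m)-1)/m$ with $D,U$ taken from Catalisano's resolutions, and then invokes the area criterion (Corollary 2.16 of \cite{Mayes12c}): the complement of $P$ has area $r/2$, convexity forces it into the triangle with legs $\gamma_1,\gamma_2$, and $\gamma_1\gamma_2=r$ forces the boundary to be the chord. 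That single observation replaces your entire profile-reconstruction argument. Your plan is salvageable and genuinely different --- it recovers the full asymptotic Hilbert function rather than just the intercepts --- but as written it is a program whose decisive steps (trace surjectivity in the critical range, and the profile-uniqueness lemma) remain to be supplied.
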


\begin{figure}
\begin{center}
\includegraphics[width=4cm]{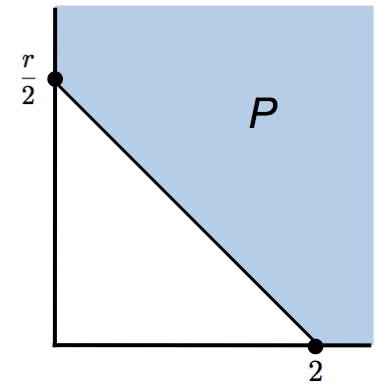}
\end{center}
\caption{The limiting shape $P$ of $\{ \text{gin}(I^{(m)})\}_m$ where $I$ is the ideal of $r \geq 4$ points lying on an irreducible conic.}
\label{fig:limitingshape}
\end{figure}

The proof of this theorem is an application of ideas that have been described elsewhere. Rather than repeating arguments here, we refer the reader elsewhere for details where necessary.

The following result describes the structure of the individual ideals $\text{gin}(I^{(m)})$ that make up the generic initial system.

\begin{thm}
\label{thm:formofgin}
Suppose that $I \subseteq K[x,y,z]$ be the ideal of a set of distinct points of $\mathbb{P}^2$.  Then the minimal generators of $\textnormal{gin}(I^{(m)})$ are 
$$\big\{ x^{\alpha(m)}, x^{\alpha(m)-1}y^{\lambda_{\alpha(m)-1}(m)}, \dots, xy^{\lambda_1(m)}, y^{\lambda_0(m)}\big \}$$
for some positive integers $\lambda_0(m), \dots, \lambda_{\alpha(m)-1}$ such that $\lambda_0(m)> \lambda_1(m) > \cdots >\lambda_{\alpha(m)-1}(m)$.  Further, if the minimal free resolution of $I^{(m)}$ is of the form
$$0 \rightarrow F_1=\bigoplus_{i=1}^{\psi} R(-u_i) \rightarrow F_1=\bigoplus_{i=1}^{\mu} R(-d_i) \rightarrow I^{(m)} \rightarrow 0$$
with $U(m)= \textnormal{max}\{ u_i\}$ and $D(m) = \textnormal{min} \{ d_i\}$, then 
$$\alpha(m) = D(m)$$
and 
$$\lambda_0(m) = U(m)-1.$$
\end{thm}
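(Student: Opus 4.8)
The plan is to read off everything from three standard properties of the reverse lexicographic generic initial ideal — that it is strongly stable, that it commutes with saturation, and that it preserves both the Hilbert function and the Castelnuovo--Mumford regularity (see \cite{Green98}). First I would record the algebraic facts about $I^{(m)}$ itself. Since $I^{(m)}=\bigcap_{i=1}^r\mathfrak{p}_i^m$, where $\mathfrak{p}_i$ is the (height two, complete intersection) prime of $p_i$, each $\mathfrak{p}_i^m$ is $\mathfrak{p}_i$-primary, so $I^{(m)}$ is saturated; moreover it defines a zero-dimensional subscheme of $\mathbb{P}^2$, so $R/I^{(m)}$ is Cohen--Macaulay of Krull dimension $1$ and hence $I^{(m)}$ has projective dimension $1$, giving the displayed two-term minimal free resolution. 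Because $\textnormal{gin}(I^{(m)})$ is then a saturated, strongly stable monomial ideal of $K[x,y,z]$, no minimal generator is divisible by $z$; and since $R/\textnormal{gin}(I^{(m)})$ has dimension $1$ (the Hilbert function is preserved), $\textnormal{gin}(I^{(m)})$ contains a pure power $x^{\alpha(m)}$ of $x$ and a pure power $y^{\lambda_0(m)}$ of $y$.

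Next I would extract the staircase shape. Put $\alpha(m)=\min\{i:x^i\in\textnormal{gin}(I^{(m)})\}$ and, for $0\le i<\alpha(m)$, $\lambda_i(m)=\min\{j:x^iy^j\in\textnormal{gin}(I^{(m)})\}$. Strong stability applied to the exchange $x^iy^j\mapsto x^{i+1}y^{j-1}$ forces $\lambda_{i+1}(m)\le\lambda_i(m)-1$, so $\lambda_0(m)>\lambda_1(m)>\cdots>\lambda_{\alpha(m)-1}(m)>\lambda_{\alpha(m)}(m)=0$; in particular all the $\lambda_i(m)$ are positive integers. Since these exponents strictly decrease, none of the monomials $x^{\alpha(m)},\,x^{\alpha(m)-1}y^{\lambda_{\alpha(m)-1}(m)},\dots,\,y^{\lambda_0(m)}$ divides another, so they are precisely the minimal generators of $\textnormal{gin}(I^{(m)})$, which is the asserted form.

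For $\alpha(m)=D(m)$ I would use invariance of the Hilbert function: the initial degree of $\textnormal{gin}(I^{(m)})$ equals that of $I^{(m)}$, which is $D(m)=\min\{d_i\}$. On the monomial side, the strict decrease gives $\lambda_i(m)\ge\alpha(m)-i$, so every minimal generator $x^iy^{\lambda_i(m)}$ has degree $i+\lambda_i(m)\ge\alpha(m)$, while $x^{\alpha(m)}$ has degree exactly $\alpha(m)$; hence the initial degree of $\textnormal{gin}(I^{(m)})$ is $\alpha(m)$, and $\alpha(m)=D(m)$. For $\lambda_0(m)=U(m)-1$ I would use invariance of regularity. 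From the two-term minimal resolution one gets $\textnormal{reg}(I^{(m)})=\max\{\max_i d_i,\ \max_i u_i-1\}$, and minimality (together with the fact that $I^{(m)}$, being of height $2$, has no free summand, so its top-degree generator occurs in a syzygy) forces $\max_i u_i>\max_i d_i$, whence $\textnormal{reg}(I^{(m)})=U(m)-1$. For the strongly stable ideal $\textnormal{gin}(I^{(m)})$, the regularity equals the largest degree of a minimal generator; and $\lambda_0(m)\ge\lambda_i(m)+i$ and $\lambda_0(m)\ge\alpha(m)$ (again from the strict decrease) show this largest degree is $\lambda_0(m)$, attained by $y^{\lambda_0(m)}$. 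Therefore $\lambda_0(m)=\textnormal{reg}(\textnormal{gin}(I^{(m)}))=\textnormal{reg}(I^{(m)})=U(m)-1$.

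The routine part is the elementary combinatorics of strongly stable ideals in two variables and the degree comparisons above. I expect the main obstacle to be organizational rather than technical: making sure the cited properties of the revlex $\textnormal{gin}$ are invoked in exactly the form needed (strong stability, commutation with saturation, invariance of Hilbert function and of regularity), and, in the regularity step, using minimality of the resolution precisely enough to conclude $\max_i u_i>\max_i d_i$ so that $\textnormal{reg}(I^{(m)})$ really is $U(m)-1$ rather than merely bounded by it.
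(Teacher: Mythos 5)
Your proposal is correct, but it reaches the second half of the theorem by a different route than the paper. For the staircase form of $\textnormal{gin}(I^{(m)})$ the paper simply cites Corollary 2.9 of \cite{Mayes12c}; your self-contained argument (saturation of $I^{(m)}$, commutation of revlex gin with saturation so no generator involves $z$, dimension count forcing pure powers of $x$ and $y$, strong stability forcing $\lambda_{i+1}(m)\le\lambda_i(m)-1$) is the standard proof of that cited result, so no issue there -- just note that strong stability of the gin requires characteristic $0$, the same convention as the sources the paper relies on. For the identities $\alpha(m)=D(m)$ and $\lambda_0(m)=U(m)-1$, the paper writes down the minimal free resolution of the stable ideal $\textnormal{gin}(I^{(m)})$ explicitly (Corollary 4.15 of \cite{Green98}) and then invokes the Cancellation Principle: the extremal Betti numbers $\beta_{0,\alpha(m)}$ and $\beta_{1,\lambda_0(m)+1}$ cannot be removed by consecutive cancellations, so they appear as the smallest shift of $F_0$ and the largest shift of $F_1$ in the resolution of $I^{(m)}$. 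You instead get $\alpha(m)=D(m)$ from invariance of the Hilbert function (both sides equal the initial degree) and $\lambda_0(m)=U(m)-1$ from the Bayer--Stillman theorem $\textnormal{reg}(\textnormal{gin}(I))=\textnormal{reg}(I)$ together with Eliahou--Kervaire ($\textnormal{reg}$ of a strongly stable ideal is its largest generator degree) and the observation that $\max_i u_i>\max_i d_i$, so that $\textnormal{reg}(I^{(m)})=U(m)-1$. Both arguments are sound; the paper's cancellation argument is more elementary in that it needs neither the regularity theorem nor Eliahou--Kervaire and it actually shows the extremal graded Betti numbers themselves persist, while yours avoids writing the resolution of the gin at the cost of invoking those two theorems. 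The one step you should flesh out is $\max_i u_i>\max_i d_i$: the clean argument is that if $\max_j u_j\le d_{i_0}=\max_i d_i$, then by minimality the entire row of the syzygy matrix corresponding to $R(-d_{i_0})$ is zero, so $I^{(m)}$ acquires a rank-one free (hence principal) direct summand, contradicting that it is a height-two ideal of a domain; your parenthetical gestures at this but as written it is too compressed to stand on its own.
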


\begin{proof}
The first part of the theorem is Corollary 2.9 of \cite{Mayes12c} and follows from results in \cite{BS87} and \cite{HS98}.  The second statement follows the a result of Hilbert-Burch, which says that, with the notation in the theorem, the minimal free resolution of $\text{gin}(I^{(m)})$ is of the form 
$$0  \rightarrow G_1  \rightarrow G_0   \rightarrow \text{gin}(I^{(m)}) \rightarrow 0$$
where $G_1 =  \bigoplus_{i=0}^{\alpha(m)} R(-\lambda_i(m)-i-1)$ and $G_0 = \big[\bigoplus_{i=0}^{\alpha(m)} R(-\lambda_i(m)-i)\big] \oplus R(-\alpha(m))$ (Corollary 4.15 of \cite{Green98}).  A \textit{consecutive cancellation} takes a sequence $\{ \beta_{i,j} \}$ to a new sequence by replacing $\beta_{i,j}$ by $\beta_{i,j}-1$ and $\beta_{i+1,j}$ by $\beta_{i+1,j}-1$.  The `Cancellation Principle' says that the graded Betti numbers of $J$ can be obtained by the graded Betti numbers of $\text{gin}(J)$ by making a series of consecutive cancellations (see Corollary 1.21 of \cite{Green98}).
Since $\lambda_0(m) +1 > \lambda_i(m)+i$ for all $i$, $\beta_{1,\lambda_0+1}\geq 1$ does not change with any such consecutive cancellation; thus, $R(-\lambda_0(m)-1)$ is the summand of $F_1$ with the largest shift.  
Likewise, $\alpha(m)<\lambda_i(m)+i+1$ for all $i$ so $\beta_{0, \alpha(m)} \geq 1$ does not change with a consecutive cancellation; thus, $R(-\alpha(m))$ is the summand of $F_0$ with the smallest shift.
\end{proof}

In the case where $m$ is even and $I$ is the ideal of $r\geq 3$ points lying on an irreducible conic in $\mathbb{P}^2$, we will see in Proposition \ref{prop:resolutions} that we can write down the entire minimal free resolution of $I^{(m)}$.  This will give us $D(m)$ and $U(m)$ when $m$ is even so that we can find the powers of $x$ and $y$, $x^{\alpha(m)} = x^{D(m)}$ and $y^{\lambda_0(m)}=y^{U(m)-1}$, that appear in a minimal generating set of $\text{gin}(I^{(m)})$.   In particular, Proposition \ref{prop:resolutions} implies the following. 

\begin{lem}
\label{lem:highlowshifts}
Suppose that $I$ is the ideal of $r \geq 3$ points in $\mathbb{P}^2$ lying on an irreducible conic and use the notation of the previous theorem.
\begin{itemize}
\item[(a)] If $r \geq 4$ is even, $D(m)=2m$ and $U(m) = \frac{rm}{2}+2$.
\item[(b)]  If $r>4$ is odd and $m$ is even, then $D(m) = 2m$ and $U(m) = \frac{rm}{2}+2$.
\item[(c)]  If $r = 3$ and $m$ is even, then $D(m) =\frac{3m}{2}$ and $U(m) =2m+1$.
\end{itemize}
\end{lem}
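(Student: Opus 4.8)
The plan is to obtain the lemma by reading the numbers $D(m)$ and $U(m)$ of Theorem~\ref{thm:formofgin} off a minimal free resolution of $I^{(m)}$. For $m$ even (and $r\ge 3$), Proposition~\ref{prop:resolutions} provides the entire minimal free resolution $0\to F_1\to F_0\to I^{(m)}\to 0$, and since $D(m)$ is the least twist appearing in $F_0$ and $U(m)$ the greatest twist appearing in $F_1$, parts (b) and (c) and the $m$ even case of (a) are obtained simply by recording these two numbers. It is worth double-checking the values of $D(m)$ against the initial degree, since by Theorem~\ref{thm:formofgin} we have $D(m)=\alpha(m)=\alpha(I^{(m)})$: the form $Q^m$, where $Q$ is a defining equation of the conic, has degree $2m$ and lies in $I^{(m)}$, so $\alpha(I^{(m)})\le 2m$; conversely, writing a nonzero $f\in(I^{(m)})_t$ as $f=Q^a g$ with $Q\nmid g$ forces $g\in(I^{(m-a)})_{t-2a}$, and restricting $g$ to the conic — on which the points $p_i$ impose the divisor $\sum_i(m-a)p_i$ of degree $r(m-a)$ — gives $2(t-2a)\ge r(m-a)$; optimizing over $0\le a\le m$ yields $\alpha(I^{(m)})\ge 2m$ when $r\ge 4$ and $\alpha(I^{(m)})\ge \tfrac{3m}{2}$ when $r=3$, the latter attained by $x^{m/2}y^{m/2}z^{m/2}$ once the three necessarily non-collinear points are moved to the coordinate points.

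The one case not supplied by Proposition~\ref{prop:resolutions} as stated is part (a) with $r\ge 4$ even and $m$ odd; here $D(m)=2m$ is already covered by the initial-degree bound, so only $U(m)=\tfrac{rm}{2}+2$ remains, and for this I would re-run, for $m$ odd, the induction on $m$ behind Proposition~\ref{prop:resolutions}. Writing $Z_k$ for the fat point scheme $kp_1+\dots+kp_r$ and $D$ for the induced reduced divisor $p_1+\dots+p_r$ of degree $r$ on $Q\cong\mathbb{P}^1$, multiplication by $Q$ gives the exact sequence of sheaves
$$0\longrightarrow\mathcal I_{Z_{m-1}}(-2)\xrightarrow{\ \cdot Q\ }\mathcal I_{Z_m}\longrightarrow\mathcal O_Q(-mD)\longrightarrow 0,$$
using that $Q$ is smooth at each $p_i$, so that $kp_i$ meets $Q$ in the divisor $k\,p_i$ and $(I^{(k)}:Q)=I^{(k-1)}$. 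Twisting by $\mathcal O(t)$ and taking cohomology, the obstruction $H^1(\mathcal I_{Z_{m-1}}(t-2))$ vanishes for $t\ge U(m-1)$, and since $r\ge 4$ forces $\tfrac{rm}{2}\ge\tfrac{r(m-1)}{2}+2=U(m-1)$ (by induction), the quotient module $\overline{I^{(m)}}:=I^{(m)}/Q\,I^{(m-1)}$ has the Hilbert function of $(R/(Q))(-\tfrac{rm}{2})$; being an ideal of the domain $R/(Q)$ with that Hilbert function, it is the principal ideal $\cong(R/(Q))(-\tfrac{rm}{2})$. Feeding its two-term resolution $0\to R(-\tfrac{rm}{2}-2)\to R(-\tfrac{rm}{2})\to(R/(Q))(-\tfrac{rm}{2})\to 0$ and the inductively known resolution of $I^{(m-1)}(-2)$ into the horseshoe lemma produces a resolution of $I^{(m)}$ and hence the recursions $D(m)=\min\{D(m-1)+2,\tfrac{rm}{2}\}$ and $U(m)=\max\{U(m-1)+2,\tfrac{rm}{2}+2\}$; with the base case $m=1$ — where $I$ is the ideal of $r$ points on the conic, $D(1)=2$, and the Hilbert function $h_{R/I}(t)=\min\{2t+1,r\}$ for $t\ge 1$ gives $\textnormal{reg}(I)=\tfrac r2+1$, i.e.\ $U(1)=\tfrac r2+2$ — these solve, using $r\ge 4$, to $D(m)=2m$ and $U(m)=\tfrac{rm}{2}+2$.

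The step I expect to be the main obstacle is verifying that the resolution of $I^{(m)}$ produced by the horseshoe lemma is minimal. As the two resolutions being spliced each have length one, the only potentially non-minimal entry is the connecting map from the new syzygy $R(-\tfrac{rm}{2}-2)$ into the shifted generators of $I^{(m-1)}$; this has no unit entry precisely because every generator degree of $I^{(m-1)}$ is bounded by $\textnormal{reg}(I^{(m-1)})=U(m-1)-1=\tfrac{r(m-1)}{2}+1<\tfrac{rm}{2}$, the strict inequality using $r>2$. It is exactly this point — together with the need for $\overline{I^{(m)}}$ to be \emph{cyclic}, which requires $rm$ even — where the hypotheses "$r\ge 4$" and "$r$ even" of part (a), and the even/odd dichotomy in the statement of the lemma, enter. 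A further technical point, and really the substance shared with the proof of Proposition~\ref{prop:resolutions}, is the cohomology bookkeeping behind the identification of $\overline{I^{(m)}}$ with a twist of $R/(Q)$ and the base-case regularity of $r$ points on a conic.
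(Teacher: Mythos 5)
Your proposal is correct and follows essentially the paper's route: the lemma is obtained by reading the smallest twist of $G_0$ and the largest twist of $G_1$ off the explicit minimal free resolutions of Proposition~\ref{prop:resolutions}, which in turn rest on Catalisano's recursion (Proposition~\ref{prop:algorithm}). The only substantive deviation is your separate induction for $r\geq 4$ even and $m$ odd, which is unnecessary: the hypothesis ``$m$ even'' in Proposition~\ref{prop:resolutions} applies only when $r$ is odd, so case (a) already covers all $m$ when $r$ is even; your argument there (the exact sequence $0\to\mathcal I_{Z_{m-1}}(-2)\to\mathcal I_{Z_m}\to\mathcal O_Q(-mD)\to 0$, the identification of $I^{(m)}/QI^{(m-1)}$ with $(R/(Q))(-\frac{rm}{2})$, and the minimality check on the horseshoe resolution) is in any case a correct re-derivation of the very step of Catalisano that the paper cites, and your initial-degree computation of $D(m)=\alpha(I^{(m)})$ is a sound independent check rather than a new ingredient.
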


 By Lemma \ref{thm:formofgin}, each of the generic initial ideals $\text{gin}(I^{(m)})$ is generated in the variables of $x$ and $y$, so we can think of each Newton polytope $P_{\text{gin}(I^{(m)})}$, and thus the limiting shape $P$ itself, as a subset of $\mathbb{R}^2$.

The following result is the key for proving the main theorem:  it describes when the limiting polytope $P$ of the symbolic generic initial system in $\mathbb{P}^2$ is defined by a single boundary line. The proof is contained in \cite{Mayes12c}.

\begin{prop}[Corollary 2.16 of \cite{Mayes12c}]
\label{prop:maxvolumeattained}
Let $I \subseteq K[x,y,z]$ be the ideal of $r$ distinct points in $\mathbb{P}^2$ and let $P$ be the limiting shape of the symbolic generic initial system $\{ \textnormal{gin}(I^{(m)})\}_m$. Suppose that the $x$-intercept $\gamma_1$ and the $y$-intercept $\gamma_2$ of the boundary of $P$ are such that $\gamma_1 \cdot \gamma_2 = r$.  Then the limiting polytope $P$ has a boundary defined by the line passing through $(\gamma_1, 0)$ and $(0, \gamma_2)$.
\end{prop}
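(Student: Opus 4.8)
The plan is to identify $P$ by pairing one exact piece of quantitative data --- the area of $\mathbb{R}^2_{\geq 0}\setminus P$ --- with an elementary convexity estimate, and the hypothesis $\gamma_1\gamma_2=r$ will be exactly what makes the estimate sharp. I would first record the structure of $P$ that comes for free. By Theorem~\ref{thm:formofgin} every $\textnormal{gin}(I^{(m)})$ is generated by monomials in $x$ and $y$ alone, so each Newton polytope $P_{\textnormal{gin}(I^{(m)})}$ is a convex subset of $\mathbb{R}^2_{\geq 0}$ closed under adding $\mathbb{R}^2_{\geq 0}$; passing to the rescaled limit, $P$ is convex and satisfies $P+\mathbb{R}^2_{\geq 0}=P$. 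Hence $\mathbb{R}^2_{\geq 0}\setminus P$ is exactly the region caught between the two coordinate axes and the boundary of $P$, the latter being a convex decreasing arc from $(0,\gamma_2)$ to $(\gamma_1,0)$.

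The substantive step is to show $\textnormal{area}\big(\mathbb{R}^2_{\geq 0}\setminus P\big)=\tfrac{r}{2}$. Since $I$ is the radical ideal of $r$ reduced points of $\mathbb{P}^2$, each $\mathfrak{p}_i$ is a complete intersection, so $\mathfrak{p}_i^{(m)}=\mathfrak{p}_i^m$ and $I^{(m)}=\bigcap_i\mathfrak{p}_i^m$ defines a fat point scheme of length $r\binom{m+1}{2}$; thus $\dim_K(R/I^{(m)})_d=r\binom{m+1}{2}$ for $d\gg 0$, and the same holds for $\textnormal{gin}(I^{(m)})$. As $\textnormal{gin}(I^{(m)})$ is extended from $K[x,y]$, this eventual value equals the number of monomials $x^iy^j\notin\textnormal{gin}(I^{(m)})$, i.e. the number of lattice points of $\mathbb{R}^2_{\geq 0}$ lying below the monomial staircase of $\textnormal{gin}(I^{(m)})$. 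That staircase and the convex region $P_{\textnormal{gin}(I^{(m)})}$ differ only inside a strip of area $O(m)$ along the boundary --- the gap between a monomial staircase and its convex hull, whose perimeter is $O(D(m)+U(m))=O(m)$ --- and comparing a lattice count to Lebesgue area costs a further $O(m)$; hence $\textnormal{area}\big(\mathbb{R}^2_{\geq 0}\setminus P_{\textnormal{gin}(I^{(m)})}\big)=r\binom{m+1}{2}+O(m)$. These complements all lie in a fixed bounded box because $D(m)/m$ and $U(m)/m$ are bounded, so area is continuous along the limit defining $P$, giving $\textnormal{area}\big(\mathbb{R}^2_{\geq 0}\setminus P\big)=\lim_m\tfrac{1}{m^2}\big(r\binom{m+1}{2}+O(m)\big)=\tfrac{r}{2}$.

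The proposition then follows from plane geometry. Because $(\gamma_1,0),(0,\gamma_2)\in P$ and $P$ is convex and upward closed, $P$ contains the entire region $\{(x,y)\in\mathbb{R}^2_{\geq 0}:\ x/\gamma_1+y/\gamma_2\geq 1\}$, so $\mathbb{R}^2_{\geq 0}\setminus P$ is contained in the triangle $T$ with vertices $(0,0),(\gamma_1,0),(0,\gamma_2)$. Therefore $\tfrac{r}{2}=\textnormal{area}\big(\mathbb{R}^2_{\geq 0}\setminus P\big)\leq\textnormal{area}(T)=\tfrac{1}{2}\gamma_1\gamma_2$, which already shows $\gamma_1\gamma_2\geq r$ in general. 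Under the hypothesis $\gamma_1\gamma_2=r$ this inequality is an equality, which forces $\mathbb{R}^2_{\geq 0}\setminus P=T$, that is, $P=\{(x,y)\in\mathbb{R}^2_{\geq 0}:\ x/\gamma_1+y/\gamma_2\geq 1\}$ --- precisely the region on and above the line through $(\gamma_1,0)$ and $(0,\gamma_2)$.

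The step I expect to be the main obstacle is the middle paragraph, converting the algebraic colength $r\binom{m+1}{2}$ into $\textnormal{area}\big(\mathbb{R}^2_{\geq 0}\setminus P\big)$. One must argue that the eventual value of the Hilbert function of $\textnormal{gin}(I^{(m)})$ genuinely counts the lattice points under the staircase (this is where ``generated in $x$ and $y$'' enters), control the two $O(m)$ discrepancies so they wash out after dividing by $m^2$, and invoke a compactness-and-convexity argument so that area is continuous under the rescaled Hausdorff limit defining $P$. Once $\textnormal{area}\big(\mathbb{R}^2_{\geq 0}\setminus P\big)=\tfrac{r}{2}$ is secured, the convex-geometry conclusion is routine.
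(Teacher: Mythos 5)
Your overall architecture is sound and is in the spirit of the result you are asked to prove (which this paper does not prove itself, deferring to Corollary 2.16 of \cite{Mayes12c}): the final step is correct, since $Q:=\mathbb{R}^2_{\geq 0}\setminus P$ is contained in the triangle $T$ with legs $\gamma_1,\gamma_2$, $\textnormal{area}(T)=\tfrac12\gamma_1\gamma_2=\tfrac r2$, and the equality $\textnormal{area}(Q)=\tfrac r2$ together with convexity and upward-closedness of $P$ forces $Q=T$. The genuine gap is exactly where you predicted it: the claim that the staircase of $\textnormal{gin}(I^{(m)})$ and its Newton polytope ``differ only inside a strip of area $O(m)$'' because the perimeter is $O(D(m)+U(m))=O(m)$ is not a valid principle. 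Borel-fixedness in $K[x,y]$ imposes only that the column heights $\lambda_0(m)>\lambda_1(m)>\cdots$ are strictly decreasing (Theorem \ref{thm:formofgin}); a strictly decreasing but concave staircase, e.g. $\lambda_i\approx\sqrt{\lambda_0^2-i^2}$, leaves a gap of order $m^2$ between the staircase and its convex hull, the same order as the quantity you are computing, even though the perimeter is $O(m)$. So nothing in your argument rules out that a positive fraction of the colength $r\binom{m+1}{2}$ is absorbed by lattice points that lie \emph{inside} the Newton polytope.

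Because of this, your counting actually establishes only one inequality, $\textnormal{area}(Q)\leq\tfrac r2$ (lattice points outside $P_{\textnormal{gin}(I^{(m)})}$ are non-ideal monomials, and $P\supseteq\frac1m P_{\textnormal{gin}(I^{(m)})}$), and that inequality is already implied by $Q\subseteq T$ once $\gamma_1\gamma_2=r$; the direction the proof needs is $\textnormal{area}(Q)\geq\tfrac r2$, i.e. that asymptotically the colength is \emph{not} wasted on gaps. That is the real content of the statement and requires an additional input: the standard route is to use that $\{\textnormal{gin}(I^{(m)})\}_m$ is a graded system, so the exponent vectors form a graded semigroup and the multiplicity theory of graded families of monomial ideals (Musta\c{t}\u{a}, Lazarsfeld--Musta\c{t}\u{a}, Kaveh--Khovanskii; this is the mechanism behind the cited corollary) gives $\lim_m \textnormal{colength}(\textnormal{gin}(I^{(m)}))/m^2=\textnormal{area}(Q)$ exactly, from which your computation $\textnormal{area}(Q)=\tfrac r2$ and the rest of your argument go through. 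Without some such input, the middle paragraph is a missing step rather than a routine estimate.
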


With these results in mind, we can now prove the main theorem.

\begin{proof}[Proof of Theorem \ref{thm:mainthm}]

Suppose first that $r \geq 4$ and that $m$ is even if $r$ is odd.  By Theorem \ref{thm:formofgin} and Lemma \ref{lem:highlowshifts}, $x^{D(m)} = x^{2m}$ and $y^{U(m)-1} = y^{\frac{rm}{2}+1}$ are the smallest powers of $x$ and $y$ contained in $\text{gin}(I^{(m)})$.  This means that the intercepts of the boundary of $P_{\text{gin}(I^{(m)})}$ are $(2m,0)$ and $(0, \frac{rm}{2}+1)$.  Thus, the intercepts of the boundary of the limiting polytope $P$ of the entire symbolic generic initial system are $(\lim_{m \rightarrow \infty} \frac{2m}{m},0) = (2, 0)$ and $(0, \lim_{m \rightarrow \infty} \frac{rm/2}{m}+1) = (0,\frac{r}{2})$.\footnote{In the case where $r$ is odd we take the limits over even $m$.}  By Proposition \ref{prop:maxvolumeattained}, the fact that $\frac{r}{2} \cdot 2 = r$ implies that  the limiting polytope $P$ is as claimed.

Now suppose that $r=3$ and that $m$ is even.  By the same argument as above, the intercepts of the boundary of the limiting polytope $P$ are $(\lim_{m \rightarrow \infty} \frac{3m/2}{m},0) = (\frac{3}{2},0)$ and $(0,\lim_{m\rightarrow \infty} \frac{2m}{m}) = (0,2)$.  Since $\frac{3}{2} \cdot 2 = 3$, the limiting polytope is as claimed.

The case where $r=2$ follows from the main theorem of \cite{Mayes12a} since $I$ is a type (1,2) complete intersection in this case.
\end{proof}

It remains to prove Lemma \ref{lem:highlowshifts} which follows immediately from the next proposition.  In particular, we will write the minimal free resolutions of the ideals $I^{(m)}$ when $I$ is the ideal of $r \geq 3$ points on an irreducible conic and $m$ is even if $r$ is odd.  

\begin{prop}
\label{prop:resolutions}
Let $I$ be the ideal of $r \geq 3$ points of $\mathbb{P}^2$ lying on an irreducible conic and suppose that that the minimal free resolution of $I^{(m)}$ is of the form
$$0 \rightarrow G_1 \rightarrow G_0 \rightarrow I^{(m)} \rightarrow 0.$$
\begin{itemize}
\item[(a)]  If $r$ is even,
$$G_0 = \bigoplus_{j=0}^m R\big(-2(m-j)-\frac{rj}{2}\big)$$
and 
$$G_1 = \bigoplus_{j=1}^m R\big(-2(m-j) - \frac{rj}{2}-2\big).$$
\item[(b)]  If $r\geq 5$ is odd and $m$ is even,
$$G_0 = \Bigg[ \bigoplus_{j=0}^{m/2} R(-2m-j(r-4)) \Bigg] \oplus \Bigg[ \bigoplus_{j=0}^{m/2-1} R^2\big(-2m-j(r-4)-\frac{r-1}{2}+1\big)\Bigg]$$
and 
$$G_1 = \Bigg[ \bigoplus_{j=1}^{m/2} R(-2m-j(r-4)-2) \Bigg] \oplus \Bigg[ \bigoplus_{j=0}^{m/2-1} R^2\big(-2m-j(r-4)-\frac{r-1}{2}\big) \Bigg].$$
\item[(c)] If $r=3$ and $m$ is even,
$$G_0 = R\Big(-\frac{3m}{2}\Big) \oplus \Bigg[ \bigoplus_{j=0}^{m/2-1} R^3\big(-\frac{3m}{2} - j-1\big) \Bigg]$$
and 
$$G_1 = \bigoplus_{j=0}^{m/2-1} R^3\Big(-\frac{3m}{2} -j-2\Big). $$ 
\end{itemize}
\end{prop}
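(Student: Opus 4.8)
The plan is to prove Proposition~\ref{prop:resolutions} case by case, cases~(a) and~(c) being comparatively soft and case~(b) carrying the real content. For~(a), with $r$ even, the first step is to observe that $I$ is a complete intersection of type $(2,\tfrac r2)$: the conic $Q$ and a general form $F$ of degree $\tfrac r2$ vanishing at the $r$ points form a regular sequence, the complete intersection $(Q,F)$ has colength $r$, and it is contained in the saturated ideal $I$ of the same colength, so $(Q,F)=I$. Since ordinary and symbolic powers of a complete intersection coincide (the powers of a complete intersection are unmixed), it then suffices to recall the minimal free resolution of the $m$-th power of a two-generated complete intersection: the generators are $Q^{m-j}F^{j}$ for $0\le j\le m$ and the first syzygies are the $m$ relations $F\cdot e_{j}-Q\cdot e_{j+1}$ for $0\le j\le m-1$. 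Reading off internal degrees — $\deg(Q^{m-j}F^{j})=2(m-j)+\tfrac r2 j$, and the relation joining $e_j$ to $e_{j+1}$ has degree $2(m-j)+\tfrac r2(j+1)$ — and reindexing produces exactly the modules $G_0,G_1$ written in~(a).

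For~(c), with $r=3$: three distinct points on an irreducible conic are non-collinear, hence after a coordinate change are the coordinate vertices of $\mathbb P^2$, so each $\mathfrak m_{p_i}$ is generated by two of the three variables; then $\mathfrak m_{p_i}^{(m)}=\mathfrak m_{p_i}^{m}$ is a monomial ideal and $I^{(m)}=\mathfrak m_{p_1}^{m}\cap\mathfrak m_{p_2}^{m}\cap\mathfrak m_{p_3}^{m}=\{x^{a}y^{b}z^{c}:\,a+b\ge m,\ a+c\ge m,\ b+c\ge m\}$. A short combinatorial analysis of this monomial ideal shows that its minimal generators are, up to permuting the variables, $x^{a}y^{m-a}z^{m-a}$ for $0\le a\le m/2$ — here is where $m$ even is used, so that $a=m/2$ occurs, yielding the single generator $(xyz)^{m/2}$ in degree $\tfrac{3m}{2}$ — and for each $0\le a\le\tfrac m2-1$ the three variable-permutations give three generators in degree $2m-a$, i.e.\ three per degree in $\tfrac{3m}{2}+1,\dots,2m$; the minimal first syzygies are $x\,e^{(x)}_{a}-yz\,e^{(x)}_{a+1}$ and its two variable-permutations (with the convention that level $m/2$ is the element $(xyz)^{m/2}$), lying three per degree in $\tfrac{3m}{2}+2,\dots,2m+1$. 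This recovers $G_0,G_1$ of~(c).

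Case~(b), with $r\ge5$ odd and $m=2n$, is the substantive one, and I would run it by liaison anchored at $m=2$. For $r=2\rho+1$ general points on a conic, the Hilbert function of $R/I$ equals $\min(2t+1,r)$ — computed from $0\to\mathcal O_{\mathbb P^2}(-2)\xrightarrow{\,Q\,}\mathcal O_{\mathbb P^2}\to\mathcal O_Q\to0$ together with the projective normality of the conic — so the minimal free resolution of $I=I^{(1)}$ is $0\to R^{2}(-\rho-2)\to R(-2)\oplus R^{2}(-\rho-1)\to I\to0$. Choosing a general $F\in(I^{(2)})_{r}$, the pair $(Q^2,F)$ is a complete intersection of type $(4,r)$ contained in $I^{(2)}$, and a local computation at each $p_i$ identifies the double-point scheme $2Z$ as the residual of $Z$ in $V(Q^2,F)$, i.e.\ $I^{(2)}=(Q^2,F):I$; liaison then builds the minimal free resolution of $I^{(2)}$ from those of $I$ and of $(Q^2,F)$ (mapping cone, after dualizing and twisting), and performing the single forced consecutive cancellation yields $G_0,G_1$ of~(b) with $n=1$. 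To pass from $m-2$ to $m$ I would use the exact sequences $0\to I^{(k-1)}(-2)\xrightarrow{\,Q\,}I^{(k)}\to\overline{I^{(k)}}\to0$, where $\overline{I^{(k)}}$ is the image of $I^{(k)}$ in $S=R/(Q)$; above an explicit degree $\overline{I^{(k)}}$ coincides with the graded ideal of $S$ cutting out the divisor $k(p_1+\cdots+p_r)$ on $Q\cong\mathbb P^1$ (the content of the nontrivial inclusion being that an $S$-section vanishing to order $k$ at the points lifts, after correction by a multiple of $Q$, to a plane form vanishing to order $k$, the obstruction being a count of conditions imposed by $(k-1)Z$), so $\overline{I^{(k)}}$ can be resolved over $R$ directly from the $\mathbb P^1$-data. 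Applying the horseshoe/mapping-cone construction twice (thereby also producing the odd intermediate resolutions along the way) gives a complex whose terms, after the cancellations dictated by the duality, are the asserted $G_0$ and $G_1$.

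The main obstacle throughout is \emph{minimality}: showing that the complexes produced by these constructions have no cancellations beyond those forced above, equivalently that the stated Betti numbers are correct. This reduces to controlling $\dim(I^{(m)})_{d}$ in every degree, i.e.\ to the assertion that the fat-point scheme $mZ$ imposes the expected number of conditions on forms of each degree; for $Z$ on a conic this non-speciality can be read off from the restriction/Veronese structure on $Q$ together with a count of normal-derivative conditions (or cited from known results on fat points supported on a rational curve), and matching $(1-t)^3\,\mathrm{HS}(R/I^{(m)})$ term by term against the alternating rank sum of the proposed complex then pins down $G_0$ and $G_1$. Once Proposition~\ref{prop:resolutions} is in hand, Lemma~\ref{lem:highlowshifts} is immediate: $D(m)$ is the least shift occurring in $G_0$ and $U(m)$ the greatest shift in $G_1$, both read directly off the displayed formulas, and feeding these into Theorem~\ref{thm:formofgin} gives $\alpha(m)$ and $\lambda_0(m)$.
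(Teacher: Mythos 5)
Your cases (a) and (c) are correct and take a genuinely different, more self-contained route than the paper, which derives all three cases by iterating Catalisano's step-by-step algorithm (Proposition \ref{prop:algorithm}) and, for $r=3$, further results of \cite{Catalisano91} on non-uniform fat points. Your observation that an even number of distinct points on an irreducible conic is a complete intersection of type $(2,\tfrac r2)$, so that $I^{(m)}=I^m$ has the standard Hilbert--Burch resolution of a power of a regular sequence, handles (a) cleanly; and since three points on an irreducible conic are never collinear, your reduction of (c) to the explicit monomial ideal $(y,z)^m\cap(x,z)^m\cap(x,y)^m$ with its listed generators and syzygies is also sound (the generator/syzygy count and degrees check out against the Hilbert polynomial). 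These give honest proofs of (a) and (c) where the paper only cites.

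Case (b), which you rightly call the substantive one, has a genuine gap at exactly the two points you wave at. First, identifying $\overline{I^{(k)}}=I^{(k)}/QI^{(k-1)}$ degreewise with the full module of sections on $Q\cong\mathbb{P}^1$ vanishing to order $k$ at the points requires surjectivity of $H^0(\mathcal{I}_{kZ}(t))\to H^0(\mathcal{O}_{\mathbb{P}^1}(2t-kr))$ in all relevant degrees, i.e.\ the non-speciality of the intermediate fat-point schemes; this is the actual content of Catalisano's theorem and is not supplied by ``a count of normal-derivative conditions.'' Second, your proposed mechanism for minimality --- matching $(1-t)^3\mathrm{HS}(R/I^{(m)})$ against the alternating rank sum of the mapping-cone complex --- cannot work, because the asserted $G_0$ and $G_1$ share internal twists: for $r=5$ and $m\geq 4$ the twist $-2m-2$ occurs both in $G_0$ (the summands $R(-2m-j)$ with $j=2$ and $R^2(-2m-j-1)$ with $j=1$) and in $G_1$ (the summand $R^2(-2m-j-2)$ with $j=0$), and similarly $-2m-3j-2$ is common for $r=7$. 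A consecutive cancellation in such a degree is invisible to the Hilbert series, so the alternating sum does not pin down the Betti numbers; one must instead show the comparison maps in the horseshoe/mapping cone are minimal or bound $\mu(I^{(m)})$ from below, which is precisely what Catalisano's algorithm delivers. Since your fallback is to ``cite known results on fat points supported on a rational curve,'' you would in effect be citing the same source as the paper, in which case it is simpler to quote the resolution algorithm itself and reduce (b) to the bookkeeping induction the paper sketches. (A minor further point: the points are arbitrary distinct points on the conic, not general ones, and the minimal free resolution of $I$ itself is not determined by the Hilbert function alone, so even your $m=1$ base case needs a word of justification --- e.g.\ it too follows from the $rt$ odd step of Proposition \ref{prop:algorithm}.)
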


To prove this proposition we will follow the results of Catalisano described in \cite{Catalisano91} that can be used to compute the minimal free resolution of \textit{any} fat point ideal
$$I_{(m_1, \dots, m_r)} = I^{m_1}_{p_1} \cap I^{m_2}_{p_2} \cap \cdots \cap I^{m_r}_{p_r}$$
as long as the points $p_1, \dots, p_r$ lie on an irreducible conic.  The following is a specialization of Catalisano's work to the case where $r\geq4$ and $m_i = m$ for all $i$ (that is, when $I$ is the ideal of a uniform fat point subscheme).

\begin{prop}[\cite{Catalisano91}]
\label{prop:algorithm}
Let $I$ be the ideal of $r \geq 4$ points of $\mathbb{P}^2$ lying on an irreducible conic.  Suppose that the minimal free resolution of $I^{(t-1)}$ is of the form 
$$0 \rightarrow F'_1 \rightarrow F'_0 \rightarrow I^{(t-1)} \rightarrow 0$$
where $F'_1 = \oplus_{i=1}^{\mu-1} R(-u_i)$ and $F'_0 = \oplus_{i=1}^{\mu} R(-d_i)$ and that the minimal free resolution of $I^{(t)}$ is of the form
$$0 \rightarrow F_1 \rightarrow F_0 \rightarrow I^{(t)} \rightarrow 0.$$
If $rt$ is even
\begin{itemize}
\item[(1)] $F_1 = [\oplus_{i=1}^{\mu-1} R(-u_i-2)] \oplus R(-\frac{rt}{2} - 2)$ and \newline $F_0 = [\oplus_{i=1}^{\mu} R(-d_i-2)] \oplus R(-\frac{rt}{2})$
\end{itemize}
while if $rt$ is odd
\begin{itemize}
\item[(2)] $F_1 = [\oplus_{i=1}^{\mu-1} R(-u_i-2)] \oplus R^2(-\frac{rt+1}{2} - 1)$ and \newline $F_0 = [\oplus_{i=1}^{\mu} R(-d_i-2)] \oplus R^2(-\frac{rt+1}{2})$.
\end{itemize}
Therefore,  one can apply this result $m$ times to find the minimal free resolution of $I^{(m)}$.   That is, first find the minimal free resolution of $I^{(1)}=I$ from $0 \rightarrow 0 \rightarrow R(0) \rightarrow I^{(0)} \rightarrow 0$, then find the minimal free resolution of $I^{(2)}$ from that of $I$, and so on.
\end{prop}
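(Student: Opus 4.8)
The plan is to follow Catalisano's method, whose engine is a Castelnuovo (residual) exact sequence relating consecutive symbolic powers through the conic. Let $q \in R$ be the degree-two form defining the irreducible conic $Q$ on which $p_1, \dots, p_r$ lie (after a linear change of coordinates, normalized to $q = xz - y^2$), and let $Z_t$ denote the fat point scheme defined by $I^{(t)} = \bigcap_i I_{p_i}^t$. Because $Q$ is smooth and irreducible and the $p_i$ are distinct points of $Q$, the residual scheme of $Z_t$ with respect to $Q$ is $Z_{t-1}$ (a local computation at each $p_i$ gives $((u,v)^t : v) = (u,v)^{t-1}$), and the trace $W := Z_t \cap Q$ is the divisor $t(p_1 + \cdots + p_r)$ on $Q \cong \mathbb{P}^1$, of degree $rt$. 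First I would record the resulting short exact sequence of saturated ideals
$$0 \to I^{(t-1)}(-2) \xrightarrow{\ \cdot q\ } I^{(t)} \to I_{W,Q} \to 0,$$
where $I_{W,Q}$ is the ideal of $W$ inside the homogeneous coordinate ring $A = R/(q)$ of the conic.

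The second step is to resolve $I_{W,Q}$ over $R$. Identifying $A$ with the second Veronese of $K[s,t]$ (so $A_d = H^0(\mathbb{P}^1, \mathcal{O}(2d))$), one has $(I_{W,Q})_d = H^0(\mathbb{P}^1, \mathcal{O}(2d - rt))$. When $rt$ is even this is a free $A$-module of rank one generated in degree $rt/2$, so as an $R$-module $I_{W,Q} \cong (R/(q))(-rt/2)$ and its minimal $R$-resolution is $0 \to R(-\tfrac{rt}{2}-2) \xrightarrow{q} R(-\tfrac{rt}{2}) \to I_{W,Q} \to 0$. When $rt$ is odd, $I_{W,Q}$ is the (shifted) module of odd-degree forms of $K[s,t]$, a rank-one maximal Cohen--Macaulay $A$-module requiring two generators in degree $\tfrac{rt+1}{2}$; its $R$-resolution is read off from the matrix factorization $\phi^2 = -q\,\mathrm{id}$ of $q$, where $\phi = \bigl(\begin{smallmatrix} y & z \\ -x & -y\end{smallmatrix}\bigr)$ has $\det \phi = xz - y^2 = q$. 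Since $\det \phi = q \ne 0$ makes $\phi$ injective over the domain $R$, this yields the length-one resolution $0 \to R^2(-\tfrac{rt+1}{2}-1) \xrightarrow{\phi} R^2(-\tfrac{rt+1}{2}) \to I_{W,Q} \to 0$. In either parity $I_{W,Q}$ thus has projective dimension one, with exactly the ``extra'' summands appearing in (1) and (2).

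Next I would feed the two length-one resolutions --- the given one for $I^{(t-1)}$, shifted by $-2$, and the one just computed for $I_{W,Q}$ --- into the horseshoe lemma applied to the short exact sequence above. Since both ends have projective dimension one, this produces a length-one graded free resolution of $I^{(t)}$ whose terms are exactly $F_0 = [\oplus_i R(-d_i-2)] \oplus (\text{new } F_0)$ and $F_1 = [\oplus_i R(-u_i-2)] \oplus (\text{new } F_1)$, matching the asserted (1) and (2) on the nose.

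The last and most delicate step is to show that this horseshoe resolution is already minimal, i.e.\ that no consecutive cancellation occurs. In the length-one case the only off-diagonal map is the lift $\tau_1$ from the new syzygy module of $I_{W,Q}$ into $\oplus_i R(-d_i - 2)$; being homogeneous of degree zero, it can have a unit (hence cancellable) entry only if a twist of the new $F_1$ summand coincides with some $d_i + 2$, which forces $d_i = rt/2$ (even case) or $d_i = \tfrac{rt-1}{2}$ (odd case) for some generator degree $d_i$ of $I^{(t-1)}$. Thus minimality comes down to the single inequality that every generator degree of $I^{(t-1)}$ is strictly smaller than $\lceil rt/2 \rceil$. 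I expect this to be the main obstacle, and the place where the hypothesis $r \ge 4$ is essential: running the induction simultaneously with the statement, one shows that the largest generator degree of $I^{(t-1)}$ is $\lceil r(t-1)/2 \rceil$, and since $r \ge 4$ gives $\lceil rt/2 \rceil - \lceil r(t-1)/2 \rceil \ge r/2 \ge 2 > 0$, the required strict inequality holds and the two parities separate cleanly. (For $r = 3$ this gap closes and genuine cancellations appear, which is why that case is handled separately.) Once minimality is established, the displayed (1) and (2) follow, and iterating from the trivial resolution of $I^{(0)} = R$ yields the minimal free resolution of $I^{(m)}$.
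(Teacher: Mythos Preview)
The paper does not actually prove this proposition: it is quoted as a specialization of Catalisano's results, and no argument is supplied beyond the citation. Your proposal is therefore not competing with any proof in the paper; rather, it is a reconstruction of the underlying argument, and as such it is essentially correct and follows the same mechanism Catalisano uses. The residual exact sequence
\[
0 \longrightarrow I^{(t-1)}(-2) \xrightarrow{\ \cdot q\ } I^{(t)} \longrightarrow I_{W,Q} \longrightarrow 0
\]
together with the explicit length-one $R$-resolution of $I_{W,Q}$ (cyclic over $A$ in the even case, the matrix-factorization module in the odd case) and the horseshoe lemma is exactly the engine behind the recursive description, and your identification of the minimality step as the crux is on target.

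Two small points of precision in your minimality paragraph. First, the inequality $\lceil rt/2\rceil - \lceil r(t-1)/2\rceil \ge r/2$ is not literally true when $r$ is odd (the difference alternates between $(r-1)/2$ and $(r+1)/2$); what you actually need and get for $r\ge 4$ is that this difference is at least $2$, which suffices to separate $\max_i d_i$ from both the even-case collision degree $rt/2$ and the odd-case collision degree $(rt-1)/2$. Second, the simultaneous induction you allude to should be stated carefully: one proves at once that (i) the horseshoe resolution at step $t$ is minimal and (ii) the maximal generator degree of $I^{(t)}$ equals $\lceil rt/2\rceil$, each feeding the other. With those adjustments your argument is complete, and it also explains transparently why $r=3$ falls outside the statement: there the gap drops to $1$, the maximal generator degree of $I^{(t-1)}$ can coincide with the new syzygy degree, and the uniform-multiplicity residual chain must be replaced by Catalisano's non-uniform reduction sequence, as the paper indicates in its treatment of case~(c).
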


\begin{proof}[Sketch of Proof of Proposition \ref{prop:resolutions}]
We will give an idea of how to find the minimal free resolutions of the ideals $I^{(m)}$ using the algorithm in Proposition \ref{prop:algorithm}.

If we are in case (a) where $r$ is even, $rt$ is even for all $t$, so to find the resolution of $I^{(t)}$ from the minimal free resolution of $I^{(t-1)}$ we follow the first case of Proposition \ref{prop:algorithm}.  In particular, to find the resolution of $I^{(m)}$ from the resolution of $I^{(0)}$, we apply part (1) exactly $m$ times for $t = 1, \dots, m$. 

If we are in case (b) where $r$ is odd, $rt$ is odd for odd $t$ and $rt$ is even for even $t$.  Thus, we need to apply both cases of Proposition \ref{prop:algorithm} to find the resolution of $I^{(m)}$.  To obtain the resolution 
$$0 \rightarrow F_1 \rightarrow F_0 \rightarrow I^{(t)} \rightarrow 0$$ 
of $I^{(t)}$ from the resolution 
$$0 \rightarrow F''_1 \rightarrow F''_0 \rightarrow I^{(t-2)} \rightarrow 0$$
of $I^{(t-2)}$ when $t$ is even, one needs to:
\begin{itemize}
\item shift each summand of $F_0$ and $F_1$ by $-4$; 
\item add $R^2(-\frac{r(t-1)+1}{2}-3)$ and $R(-\frac{rt}{2}-2)$ to $F''_1$; and
\item add $R^2(-\frac{r(t-1)+1}{2}-3)$ and $R(-\frac{rt}{2})$ to $F''_0$.
\end{itemize}
If $m$ is even, we can follow this procedure $\frac{m}{2}$ times with $t = 2, 4, 6, \dots, m$ to find the resolution of $I^{(m)}$ from that of $I^{(0)}$.

For case (c) when $r=3$ and $m$ is even, one needs to use other results of \cite{Catalisano91} beyond those stated in Proposition \ref{prop:algorithm}.  The general idea is the same as above:  use a sequence of fat point schemes $Z_0 = m(p_1+p_2+p_3), Z_1, Z_2, \dots, Z_H = 0(p_1+p_2+p_3)$ and find the minimal free resolution of $I_{Z_{H-1}}$ from that of $I_{Z_H}$, then find the minimal free resolution of $I_{Z_{H-2}}$ from that of $I_{Z_{H_1}}$, and so on, until we can find the minimal free resolution of $I^{(m)} = I_{Z_0}$ from that of $I_{Z_1}$.  However, when $r=3$ not all of the $Z_i$ will be uniform fat point subschemes.  In particular,  subsequences of the form $Z_l = t(p_1+p_2+p_3)$, $Z_{l+1} = (t-1)p_1+(t-1)p_2+tp_3$, $Z_{l+2} = (t-1)p_1+(t-2)p_2+(t-1)p_3$, $Z_{l+3} = (t-2)(p_1+p_2+p_3)$ come together to form the sequence $Z_0, \dots, Z_H$.  See \cite{Catalisano91} for further details.
\end{proof}

\bibliography{ConicBib}
\bibliographystyle{amsalpha}
\nocite{*}

\end{document}